\newtheorem{theorem}{Theorem}
\newtheorem{remark}{Remark}
\newtheorem{lemma}{Lemma}
\newtheorem{corollary}{Corollary}
\newtheorem{definition}{Definition}
\DeclareMathAlphabet{\mathpzc}{OT1}{pzc}{m}{it}
\begin{document}

\title{$L^p$-estimates for the Schr\"odinger equation associated to the harmonic oscillator}

\author{Duv\'an Cardona}
\affil{Pontificia Universidad Javeriana, Mathematics Department, Bogot\'a-Colombia}
\affil[1]{cardonaduvan@javeriana.edu.co}

\abstract{In this paper we obtain  some Strichartz estimates for the Schr\"odinger equation associated to the harmonic oscillator and the Laplacian. Our main tool will be some embeddings between Lebesgue spaces and suitable Triebel-Lizorkin spaces. MSC 2010. Primary: 42B35, Secondary: 42C10, 35K15. To appear in Electron. J. Differential Equations. Received: Feb 6-2018; Accepted: Aug 8-2018.}

\maketitle
\section{Introduction}
Let us consider the quantum harmonic oscillator $H:=-\Delta+|x|^2$ on $\mathbb{R}^n$ where $\Delta$ is the standard Laplacian.  In this paper we obtain regularity for the Schr\"odinger equation (associated to $H$)  given by
\begin{equation}\label{SEq}
iu_t(t,x)-Hu(x,t)=0, 
\end{equation}
with initial data $u(0,\cdot)=f.$ As it is well known, this is an important model in quantum mechanics (R. P. Feynman, and A.R. Hibbs, \cite{FeyHib65}). As consequence of such estimates we also provide estimates for the classical Schr\"odinger equation
\begin{equation}\label{SEq'}
iu_t(t,x)+\Delta u(x,t)=0. 
\end{equation}
The regularity for the problem \eqref{SEq} has been extensively developed, some works on the subject are S. Thangavelu \cite[Section 5]{thangavelu0}, B. Bongioanni and J. L. Torrea \cite{BonTorrea}, B. Bongioanni and K. M. Rogers \cite{BonRog} and K. Yajima \cite{Yajima} and references therein. On the other hand, regularity properties for \eqref{SEq'} can be found in the seminal work of J. Ginibre and G. Velo \cite{GinVel}, and in A. Moyua and L. Vega \cite{MoyuaVega}, M. Keel and T. Tao \cite{KeelTao} and references therein. The works L. Carleson \cite{Carleson} and B. Dahlberg and C. Kenig \cite{DalKen} include  pointwise convergence theorems for the solution $u(x,t)=e^{it\Delta}f$. 

It was proved in \cite{MoyuaVega}  the following sharp theorem: for  $\frac{2(n+2)}{n}\leq p\leq \infty,$ and $2\leq q< \infty$ with $\frac{1}{q}\leq \frac{n}{2}(\frac{1}{2}-\frac{1}{p}),$  
\begin{equation}\label{BV}
\Vert u(t,x)\Vert_{L^p_{x}(\mathbb{R}^n\,, L^q_{t}[0,2\pi])}\leq C_s\Vert f\Vert_{\mathcal{H}^s(\mathbb{R}^n)}
\end{equation}
holds true for all $s\geq s_{n,p,q}:=n(\frac{1}{2}-\frac{1}{p})-\frac{2}{q}.$ If $s<s_{n,p,q}$ then \eqref{BV} is false. In the result above $\mathcal{H}^s$ is the Sobolev space associated to $H$ and with norm $\Vert f\Vert_{\mathcal{H}^s}:=\Vert H^{s/2}f \Vert_{L^2}.$ The proof of  \eqref{BV} involves Strichartz estimates of M. Keel and T. Tao \cite{KeelTao} and
 Wainger's Sobolev embedding theorem. It is important to mention that the machinery of the work M. Keel and T. Tao \cite{KeelTao} implies the following estimate
 \begin{equation}\label{KT}
\Vert u(t,x)\Vert_{L^q_{t}([0,2\pi],L^p_{x}(\mathbb{R}^n) )}\leq C_p\Vert f\Vert_{L^2(\mathbb{R}^n)},
\end{equation}
 for $2\leq q<\infty$ and $\frac{1}{q}= \frac{n}{2}(\frac{1}{p}-\frac{1}{2}),$ excluding the case $(p,q,n)=(\infty, 2, 2).$ On the other hand, H. Koch and
D. Tataru have proved the estimate \eqref{KT} for Schr\"odinger type operators in more general contexts, but including the operator $H,$ and they have proved that estimates of this type can be not obtained for $2\leq p<\frac{2n}{n-2}.$

A remarkable formula that links the solution of \eqref{SEq} to that of the classical
Schr\"odinger equation (see P. Sj\"ogren, and J.L. Torrea \cite{SjTo}) is the following
\begin{equation}\label{SjToFormula}
\Vert e^{-it((-\Delta+|x|^2))}f\Vert_{L^q[(0,\frac{\pi}{4}), L_x^p(\mathbb{R}^d)]}=\Vert e^{it\Delta}f\Vert_{L^q[(0,\infty), L_x^p(\mathbb{R}^d)]}
\end{equation}
for $1\leq p,q\leq \infty$ and $\frac{2}{q}=n(\frac{1}{2}-\frac{1}{p}).$ As it was pointed out in \cite{SjTo}, 
the interval of integration in the $t$ variable is now bounded, \eqref{KT} remains true if the
equality in \eqref{SjToFormula} is replaced by the inequality $ n(\frac{1}{2}-\frac{1}{p})\leq\frac{2}{q},$ and the interval $(0,\frac{\pi}{4})$ can be replaced by $(0,\frac{\pi}{2})$ and in a such case the two norms are equivalents for real functions $f$.   In particular,  \eqref{SjToFormula} shows that \eqref{KT} is equivalent to the following Strichartz estimate (see \cite{KochTat})
\begin{equation}\label{KT'}
\Vert e^{it\Delta}f\Vert_{L^q[(0,\infty), L_x^p(\mathbb{R}^d)]}\leq C\Vert f\Vert_{L^2(\mathbb{R}^n)}
\end{equation}
which holds if and only if $n=1$ and $2\leq p\leq \infty,$ $n=2$ and $2\leq p<\infty$ and $2\leq p<\frac{2n}{n-2}$ for $n\geq 3.$ 

The novelty of this paper is that we provide regularity results for the Sch\"odinger equation associated to $H,$ involving $L^p$-Sobolev norms for the initial data instead of the $L^2$ and $L^2$-Sobolev bounds mentioned above.
Our main result in this paper is the following theorem.
\begin{theorem}\label{mainThe}
Let us assume $n>2,$ $2\leq q<\infty$ and $1<p<2$  satisfying $|\frac{1}{2}-\frac{1}{p}|<\frac{1}{2n}.$ Then, the following estimate
\begin{equation}\label{mainestimate}
\Vert u(t,x) \Vert_{L^{p'}_x[\mathbb{R}^n,\,L^q_t[0,2\pi]]}\leq C\Vert f\Vert_{W^{2s,p,H}(\mathbb{R}^n)} 
\end{equation}
holds true for every $s\geq s_q:=\frac{1}{2}-\frac{1}{q}.$ In particular, if $q=2$ we have
\begin{equation}
\Vert u(t,x) \Vert_{L^{p'}_x[\mathbb{R}^n,\,L^2_t[0,2\pi]]}\leq C\Vert f\Vert_{L^{p}(\mathbb{R}^n)}.
\end{equation} Moreover, for $n>2,$  $1<p<2,$ and $1\leq q\leq p',$ we have
\begin{equation}
\Vert u(t,x) \Vert_{L^{p'}_x[\mathbb{R}^n,\,L^q_t[0,2\pi]]}\leq C\Vert f\Vert_{L^{p}(\mathbb{R}^n)},
\end{equation} provided that $|\frac{1}{p}-\frac{1}{2}|<\frac{1}{nq}.$
\end{theorem}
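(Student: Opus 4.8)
The plan is to reduce the mixed-norm estimate to a single square-function (Triebel--Lizorkin) inequality for the Hermite operator, and then to invoke a Sobolev-type embedding of $L^p$ into the corresponding Hermite--Triebel--Lizorkin space, which is exactly the mechanism announced in the abstract. Expanding the solution through the Hermite decomposition, write $u(t,x)=\sum_{k\ge0}e^{-i(2k+n)t}P_kf(x)$, where $P_k$ denotes the orthogonal projection onto the eigenspace of $H$ with eigenvalue $\lambda_k=2k+n$. Since the factor $e^{-int}$ is unimodular and the remaining exponentials $e^{-2ikt}$ are $\pi$-periodic, for each fixed $x$ the map $t\mapsto u(t,x)$ is a trigonometric series on the circle whose coefficients are the numbers $P_kf(x)$, so that $\Vert u(\cdot,x)\Vert_{L^q_t[0,2\pi]}$ is controlled by a Fourier-analytic norm of the sequence $(P_kf(x))_k$.

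First I would estimate the time norm. For $2\le q<\infty$ the Sobolev embedding $H^{s_q}(\mathbb{T})\hookrightarrow L^q(\mathbb{T})$, with $s_q=\tfrac12-\tfrac1q$, gives
$$\Vert u(\cdot,x)\Vert_{L^q_t[0,2\pi]}\le C\Big(\sum_{k\ge0}\langle\lambda_k\rangle^{2s_q}|P_kf(x)|^2\Big)^{1/2};$$
for $1\le q\le 2$ the interval is bounded, so H\"older (equivalently Hausdorff--Young in the favourable range) yields the same bound with the weight $\langle\lambda_k\rangle^{2s_q}\le 1$, and in either case $s_q$ is the sharp amount of time-smoothing produced by the periodicity of the spectrum. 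Taking the $L^{p'}_x$ norm and using that $H^{s_q}P_kf=\lambda_k^{s_q}P_kf$, so that $\lambda_k^{2s_q}|P_kf|^2=|P_k(H^{s_q}f)|^2$, the left-hand side of the desired estimate is dominated by
$$\Big\Vert\Big(\sum_{k\ge0}\lambda_k^{2s_q}|P_kf|^2\Big)^{1/2}\Big\Vert_{L^{p'}_x},$$
which is precisely the norm of $f$ in the Hermite--Triebel--Lizorkin space $F^{2s_q,p'}_{2}$ associated with $H$, i.e. the $L^{p'}$ norm of the Hermite square function of $H^{s_q}f$.

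It then remains to prove the embedding
$$\Big\Vert\Big(\sum_{k\ge0}\lambda_k^{2s_q}|P_kf|^2\Big)^{1/2}\Big\Vert_{L^{p'}_x}\le C\Vert f\Vert_{L^p},$$
that is, $L^p(\mathbb{R}^n)\hookrightarrow F^{2s_q,p'}_2(H)$. At $q=2$ one has $s_q=0$ and this is exactly the square-function bound $\Vert Sf\Vert_{L^{p'}}\le C\Vert f\Vert_{L^p}$ underlying the special case already recorded, valid under $|\tfrac1p-\tfrac12|<\tfrac{1}{2n}=\tfrac{1}{nq}$. For general $q$ the weight $\lambda_k^{2s_q}$ forces the inequality to gain \emph{simultaneously} the integrability $2(\tfrac1p-\tfrac12)$ and the smoothness $s_q$; balancing these against the dimension $n$ is what produces the sharp admissible range $|\tfrac1p-\tfrac12|<\tfrac{1}{nq}$, which collapses to the known threshold when $q=2$ and opens up (for $1\le q<2$, where $s_q\le0$) through the decay of the weights. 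I expect this embedding --- the smoothing, weighted Hermite square-function inequality together with the precise determination of its range of validity, including the delicate borderline behaviour coming from the sharp (non-smooth) spectral projections $P_k$ --- to be the main obstacle; once it is available, chaining it with the time-smoothing bound of the previous step closes the argument for every $1\le q\le p'$.
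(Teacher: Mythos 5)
Your reduction of the time norm is the same as the paper's: you use the Sobolev embedding $H^{s_q}(\mathbb{T})\hookrightarrow L^q(\mathbb{T})$ (the paper invokes Wainger's theorem in exactly this role) to dominate $\Vert u(\cdot,x)\Vert_{L^q_t[0,2\pi]}$ by a Hermite square function, and then take the $L^{p'}_x$ norm. But from that point on there is a genuine gap: the inequality you ``expect to be the main obstacle,'' namely the embedding of $L^p$ into a Hermite--Triebel--Lizorkin space, \emph{is} the entire content of the theorem, and you do not prove it. The paper closes it with a short, specific argument that your proposal does not contain: by orthogonality of the exponentials $e^{-i\ell t}$ on $[0,2\pi]$ one has the pointwise identity $\Vert u(\cdot,x)\Vert_{L^2_t[0,2\pi]}^2=2\pi\sum_{\ell}|P_\ell f(x)|^2$, so the square-function norm $\Vert f\Vert_{\mathpzc{F}^0_{p',2}}$ is itself the mixed norm $\Vert u\Vert_{L^{p'}_x(L^2_t)}$; Karadzhov's dispersive estimate $\Vert e^{-itH}f\Vert_{L^{p'}}\leq C|t|^{-n|\frac1p-\frac12|}\Vert f\Vert_{L^p}$ gives a bound in $L^2_t(L^{p'}_x)$ because $|t|^{-2n|\frac1p-\frac12|}$ is integrable on $[0,2\pi]$ precisely when $|\frac1p-\frac12|<\frac{1}{2n}$; and Minkowski's integral inequality (usable since $p'\geq 2$) swaps the order of integration. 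Without this (or some substitute), your argument proves nothing.

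There is also a mismatch in what you reduce to for $q>2$. You aim at the \emph{weighted} embedding $\bigl\Vert(\sum_k\lambda_k^{2s_q}|P_kf|^2)^{1/2}\bigr\Vert_{L^{p'}}\leq C\Vert f\Vert_{L^p}$, i.e.\ a bound by the plain $L^p$ norm. The theorem's first assertion has $\Vert f\Vert_{W^{2s,p,H}}=\Vert H^sf\Vert_{L^p}$ on the right, and the correct (and much easier) move is to apply the \emph{unweighted} $q=2$ embedding to $g=H^{s_q}f$, which immediately yields $\Vert f\Vert_{\mathpzc{F}^{s_q}_{p',2}}\leq C\Vert H^{s_q}f\Vert_{L^p}$ on the full range $|\frac1p-\frac12|<\frac{1}{2n}$. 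Your stronger weighted statement is not needed, is not what the theorem claims, and on your proposed range $|\frac1p-\frac12|<\frac{1}{nq}$ would in any case give a smaller $p$-range than the theorem's for $q>2$. Finally, for the third assertion ($1\leq q\leq p'$, range $|\frac1p-\frac12|<\frac{1}{nq}$) the paper does not pass through a square function at all: it integrates the dispersive estimate directly in $L^q_t$, where the condition $nq|\frac1p-\frac12|<1$ is exactly the integrability of $|t|^{-nq|\frac1p-\frac12|}$, and then applies Minkowski using $q\leq p'$. Your heuristic about ``balancing integrability against smoothness'' gestures at the right numerology but does not supply a proof.
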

Now, in the following remarks, we briefly discuss some consequences of our main result.
\begin{remark}
The main contribution in  Theorem \ref{mainThe} is the estimate \eqref{mainestimate} which in particular implies an analogue of the Littlewood-Paley theorem (see  \eqref{analoguelittlewood}). Littlewood-Paley  type results can be understood as substitutes of the Plancherel identity on $L^p$-spaces.  
\end{remark}
\begin{remark}
An important consequence of Theorem \ref{mainThe} is the following estimate 
\begin{equation}\label{StrEst}
\Vert e^{it\Delta}f\Vert_{L^q[(0,\infty), L_x^p(\mathbb{R}^d)]}\asymp\Vert u(t,x)\Vert_{L^q_{t}([0,2\pi]\,, L^p_{x}(\mathbb{R}^n))}\leq C \Vert f\Vert_{\mathpzc{F}^s_{p,2}(\mathbb{R}^n)},\,\, s\geq {s_q},
\end{equation} for $2\leq p\leq q<\infty, $ $\frac{2}{q}= n(\frac{1}{2}-\frac{1}{p}),$ (see Theorem \ref{StrEst'}), the inequality
\begin{equation}\label{StrEst''''}
\Vert e^{it\Delta}f\Vert_{L^q[(0,\infty), L_x^{p'}(\mathbb{R}^d)]}\asymp\Vert u(t,x)\Vert_{L^q_{t}([0,2\pi]\,, L^{p'}_{x}(\mathbb{R}^n))}\leq C \Vert f\Vert_{W^{2s,p,H}(\mathbb{R}^n)},\,\, s\geq {s_q},
\end{equation}
for $|\frac{1}{p}-\frac{1}{2}|<\frac{1}{2n},$ $1<p<2,$ $n>2$ and $\frac{2}{q}= n(\frac{1}{p}-\frac{1}{2}),$ (please, let us compare \eqref{StrEst''''} and \eqref{KT}) as well as the estimate
\begin{equation}
\Vert f\Vert_{\mathpzc{F}^{0}_{p,2}(\mathbb{R}^n)}\leq C\Vert e^{it\Delta}f\Vert_{L^q[(0,\infty), L_x^p(\mathbb{R}^n)]}\asymp C\Vert u(t,x)\Vert_{L^q_{t}([0,2\pi]\,, L^p_{x}(\mathbb{R}^n))}
\end{equation} when $2\leq q\leq p<\infty $ provided that $ n(\frac{1}{2}-\frac{1}{p})=\frac{2}{q}.$ In the results above the spaces $\mathpzc{F}^{s}_{p,2}$ are  Triebel-Lizorkin spaces associated to $H$ and they will be introduced in the next section. 
\end{remark}
\begin{remark}
Finally, \eqref{StrEst} links our results with those in \cite{KeelTao} and \cite{SjTo}. For $\frac{1}{q}=\frac{n}{2}(\frac{1}{2}-\frac{1}{p})$ we show in Corollary \ref{improv} that the following estimate
\begin{equation}\label{impro}
\Vert u(t,x)\Vert_{L^p_{x}(\mathbb{R}^n\,, L^q_{t}[0,2\pi])}\leq C_s\Vert f\Vert_{{L}^2(\mathbb{R}^n)}
\end{equation}
holds true provided that $n=1$ and $2\leq p\leq \infty,$ $n=2$ and $2\leq p<\infty$ and $2\leq p<\frac{2n}{n-2}$ for $n\geq 3.$  As a consequence of the embedding $\mathcal{H}^{s}\hookrightarrow {L}^{2}  $ for $s\geq 0$, the estimate \eqref{impro}  improves   \eqref{BV} in any case above.
\end{remark}
This paper is organized as follows. In section  \ref{Preliminaries} we present some basics on the spectral decomposition of the harmonic oscillator and we discuss our analogue of the Littlewood-Paley theorem. Finally, in the last section we provide our regularity results.

\section{Spectral decomposition of the harmonic oscillator and a result of type Littlewood-Paley }\label{Preliminaries}

Let  $H=-\Delta+|x|^2$ be the Hermite operator or (quantum) \emph{harmonic oscillator}. This operator extends to an unbounded self-adjoint operator on $L^{2}(\mathbb{R}^n) $, and its spectrum consists of the discrete set  $\lambda_\nu:=2|\nu|+n,$ $\nu\in \mathbb{N}_0^n,$  with a set of \emph{real eigenfunctions} $\phi_\nu, $ $\nu\in \mathbb{N}_0^n,$ (called Hermite functions) which provide an orthonormal basis of ${L}^2(\mathbb{R}^n).$
 Every Hermite function  $\phi_{\nu}$  on $\mathbb{R}^n$ has the form
\begin{equation}
\phi_\nu=\Pi_{j=1}^n\phi_{\nu_j},\,\,\, \phi_{\nu_j}(x_j)=(2^{\nu_j}\nu_j!\sqrt{\pi})^{-\frac{1}{2}}H_{\nu_j}(x_j)e^{-\frac{1}{2}x_j^2}
\end{equation}
where $x=(x_1,\cdots,x_n)\in\mathbb{R}^n$, $\nu=(\nu_1,\cdots,\nu_n)\in\mathbb{N}^n_0,$ and $$H_{\nu_j}(x_j):=(-1)^{\nu_j}e^{x_j^2}\frac{d^k}{dx_{j}^k}(e^{-x_j^2})$$ denotes the Hermite polynomial of order $\nu_j.$  By the spectral theorem, for every $f\in\mathscr{D}(\mathbb{R}^n)$ we have
\begin{equation}
Hf(x)=\sum_{\nu\in\mathbb{N}^n_0}\lambda_\nu\widehat{f}(\phi_\nu)\phi_\nu(x),\,\,\,
\end{equation} where $\widehat{f}(\phi_v) $ is the Hermite-Fourier transform of $f$ at $\nu$ defined by
\begin{equation} \widehat{f}(\phi_\nu) :=\langle f,\phi_\nu \rangle_{L^2(\mathbb{R}^n)}=\int_{\mathbb{R}^n}f(x)\phi_\nu(x)\,dx.\end{equation}
The main tool in the harmonic analysis of the harmonic oscillator is the Hermite semigroup, which we introduce as follows.  If $P_{\ell},$ $\ell\in2\mathbb{N}_0+n,$ is the projection on $L^{2}(\mathbb{R}^n)$ given by
\begin{equation}
P_{\ell}f(x):=\sum_{2|\nu|+n=\ell}\widehat{f}(\phi_\nu)\phi_\nu(x),
\end{equation}
 then,  the Hermite semigroup (semigroup associated to the harmonic oscillator) $T_{t}:=e^{-tH},$ $t>0$ is given by
 \begin{equation}
 e^{-tH}f(x)=\sum_{\ell}e^{-t\ell}P_{\ell}f(x).
 \end{equation}
 For every $t>0,$ the operator $e^{-tH}$ has Schwartz kernel given by
 \begin{equation}\label{kernelt}
 K_{t}(x,y)=\sum_{\nu\in\mathbb{N}^n_0}e^{-t(2|\nu|+n)}\phi_{\nu}(x)\phi_\nu(y).
 \end{equation}
 In view of Mehler's formula (see Thangavelu \cite{Thangavelu}) the above series can be summed up and we obtain
 \begin{equation}\label{trace2}
 K_{t}(x,y)=(2\pi)^{-\frac{n}{2}}\sinh(2t)^{-\frac{n}{2}}e^{-(\frac{1}{2}|x|^2+|y|^2)\coth(2t)+x\cdot y\cdot \textnormal{csch}(2t))}.
 \end{equation}
 In this paper we want to estimate the mixed norms $L^p_x(L^q_t)$ of  solutions to Sch\"rodinger equations by using the following version of Triebel-Lizorkin associtaed to $H$.
 \begin{definition}
Let us consider $0<p\leq \infty,$ $r\in\mathbb{R}$ and $0<q\leq \infty.$ The Triebel-Lizorkin space associated to $H,$ to the family of projections $P_{\ell},$ $\ell \in 2\mathbb{N}+n,$ and to the parameters $p,q$ and $r$ is defined by those complex functions $f$ satisfying
\begin{equation}
\Vert f\Vert_{\mathpzc{F}^r_{p,q}(\mathbb{R}^n)}:=\left\Vert \left(\sum_\ell \ell^{rq}|P_\ell f|^{q}\right)^{\frac{1}{q}}\right\Vert_{L^p(\mathbb{R}^n)}<\infty.
\end{equation}
 \end{definition}
 The definition considered above differs from those arising with dyadic decompositions (see e.g. \cite{BuiDuong} and \cite{PeXu}). The following are natural embedding properties of such spaces.  $\mathcal{H}^s$ denotes the Sobolev space associated to $H$ and defined by the norm $\Vert f\Vert_{\mathcal{H}^s}:=\Vert H^{s/2}f \Vert_{L^2}.$ Sobolev spaces $W^{2s,p,H}$  in $L^p$-spaces and associated to $H,$ can be defined by the norm $\Vert f\Vert_{W^{2s,p,H}}:=\Vert H^sf\Vert_{L^p}.$
 \begin{itemize}
\item[(1)] ${\mathpzc{F}}^{r+\varepsilon}_{p,q_1}\hookrightarrow {\mathpzc{F}}^{r}_{p,q_1}\hookrightarrow {\mathpzc{F}}^{r}_{p,q_2}\hookrightarrow \mathpzc{F}^{r}_{p,\infty},$  $\varepsilon>0,$ $0<p\leq \infty,$ $0<q_{1}\leq q_2\leq \infty.$
\item[(2)]  $\mathpzc{F}^{r+\varepsilon}_{p,q_1}\hookrightarrow \mathpzc{F}^{r}_{p,q_2}$, $\varepsilon>0,$ $0<p\leq \infty,$ $1\leq q_2<q_1<\infty.$
\item[(3)] $\mathpzc{F}^0_{2,2}=L^2$ and consequently, for every $s\in\mathbb{R},$ $\mathcal{H}^{2s}=\mathpzc{F}^s_{2,2}.$  Other properties associated to Sobolev spaces of the harmonic oscillator can be found in \cite{BuiDuong,BonTorrea} and \cite{PeXu}.
\end{itemize}
Now we discuss a close relation between  $\mathpzc{F}^{0}_{p,2}$ and Lebesgue spaces. If $\psi$ is a smooth function supported in $[\frac{1}{4},2],$ such that $\psi=1$ on $[\frac{1}{2},1],$  
\begin{equation}
\sum_{k=0}^\infty\psi_k(t)=1, \,\, \psi_{k}(t):=\psi(2^{-k}t),
\end{equation} and $A$ is an elliptic pseudo-differential operator on $\mathbb{R}^n$ of order $\nu>0,$ the (dyadic) Triebel-Lizorkin space $F^{r}_{p,q,A}(\mathbb{R}^n)$ associated to $A$  is defined by the norm
\begin{equation}
\Vert f\Vert_{F^{r}_{p,q,A}}:=\Vert \{2^{kr/\nu}\Vert \psi_{k}(A)f \Vert_{L^p}\}\Vert_{\ell^{q}}
\end{equation}
where $r\in\mathbb{R}$ and $0<p,q\leq \infty.$ For $A=H$ or $A=\Delta_{x}$ is known the Littlewood-Paley theorem (see \cite{Duo}) which stands that $F^{0}_{p,2,A}=L^p$ for all $1<p<\infty.$ If $A=\Delta_x,$  one also have
\begin{equation}\label{mLPT}
\left\Vert \left(\sum_{k}|1_{(k,k+1)}(\Delta_x)f|^2\right)^{\frac{1}{2}}\right\Vert_{L^p(\mathbb{R}^n)}\leq C\Vert f\Vert_{L^p},\,2<p<\infty,
\end{equation}
with $C$ depending only on $p.$ However, such inequality is false for $1<p<2.$
For $\ell \in 2\mathbb{N}+n,$ $P_{\ell}=1_{[\ell,\ell+1)}(H)$ and 
\begin{equation}
\Vert f\Vert_{\mathpzc{F}^0_{p,2}}=\left\Vert \left(\sum_{\ell}|1_{[\ell,\ell+1)}(H)f|^2\right)^{\frac{1}{2}}\right\Vert_{L^p(\mathbb{R}^n)}.
\end{equation}
Although in Remark \ref{remark}, we explain in detail that we have not a Littlewood-Paley theorem for $\mathpzc{F}^0_{p,2},$ in the proof of our main theorem we obtain the following estimate for $1<p<2$ (see equation \eqref{LpT})
\begin{equation}\label{analoguelittlewood}
\Vert f\Vert_{\mathpzc{F}^0_{p',2}}=\left\Vert \left(\sum_{\ell}|1_{[\ell,\ell+1)}(H)f|^2\right)^{\frac{1}{2}}\right\Vert_{L^{p'}(\mathbb{R}^n)}\leq C\Vert f\Vert_{L^p}
\end{equation}
provided that $|\frac{1}{p}-\frac{1}{2}|<\frac{1}{2n}.$ Such inequality is indeed, an analogue of \eqref{mLPT}. An immediate consequence is the estimate:
\begin{equation}\label{finalembedding}
\Vert f\Vert_{\mathpzc{F}^s_{p',2}}=\Vert H^sf\Vert_{\mathpzc{F}^0_{p',2}}\leq C \Vert H^sf\Vert_{L^p}=: C\Vert f\Vert_{W^{2s,p,H}}
\end{equation}
provided that $|\frac{1}{p}-\frac{1}{2}|<\frac{1}{2n}.$
\section{Regularity properties}

In order to analyze the mixed norms of solutions of the Sch\"odinger equation we need the following multiplier theorem. The space $L^2_f(\mathbb{R}^n)$ consists of those finite linear combinations of Hermite functions on $\mathbb{R}^n.$
\begin{theorem}
Let us assume that $m\in L^{\infty}(\mathbb{N}_0)$ is a bounded function. Then the multiplier $m(H)$ extends to a bounded operator on $\mathpzc{F}^{0}_{p,q}(\mathbb{R}^n)$ for all  $0<p\leq \infty$ and $0<q\leq \infty.$ Moreover
\begin{equation}
\Vert m(H) \Vert_{\mathscr{B}(\mathpzc{F}^{0}_{p,q})}= \Vert m\Vert_{L^{\infty}}.
\end{equation}
In particular if $m:=1_{[0,\ell']},$  then $S_{\ell'}=1_{[0,\ell']}(H),$ $\Vert S_{\ell'} \Vert_{\mathscr{B}(\mathpzc{F}^{0}_{p,q})}=1$ and 
\begin{equation}\label{approximation}
\lim_{\ell'\rightarrow \infty}\Vert S_{\ell'}f-f \Vert_{\mathpzc{F}^{0}_{p,q}}=0
\end{equation}
uniformly on  the $\mathpzc{F}^{0}_{p,q}$-norm.
\end{theorem}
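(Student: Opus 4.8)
The plan is to exploit that $\mathpzc{F}^0_{p,q}$ is built directly from the exact spectral projections $P_\ell = 1_{[\ell,\ell+1)}(H)$, so that $m(H)$ acts diagonally on the defining square function and no smoothness of $m$ is needed. Writing $m(H) = \sum_\ell m(\ell)P_\ell$ and using the orthogonality $P_\ell P_{\ell'} = \delta_{\ell\ell'}P_\ell$, I would first record $P_\ell(m(H)f) = m(\ell)P_\ell f$, so that pointwise in $x$,
\[
\left(\sum_\ell |P_\ell(m(H)f)(x)|^q\right)^{1/q} = \left(\sum_\ell |m(\ell)|^q|P_\ell f(x)|^q\right)^{1/q} \leq \Vert m\Vert_{L^\infty}\left(\sum_\ell |P_\ell f(x)|^q\right)^{1/q},
\]
with the evident modification for $q=\infty$. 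Taking $L^p$-norms gives $\Vert m(H)f\Vert_{\mathpzc{F}^0_{p,q}}\leq \Vert m\Vert_{L^\infty}\Vert f\Vert_{\mathpzc{F}^0_{p,q}}$; since this estimate is purely pointwise it needs no density and holds first on $L^2_f$ and then, by the extension discussed below, on all of $\mathpzc{F}^0_{p,q}$ for every $0<p,q\leq\infty$. In particular each $P_\ell$ and each partial sum $S_{\ell'}=\sum_{\ell\leq\ell'}P_\ell=1_{[0,\ell']}(H)$ is bounded with norm at most $1$.

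I would obtain the matching lower bound by testing on a single eigenspace: fixing $\ell_0$ in the spectrum and taking $f = P_{\ell_0}f$, every other projection vanishes, so $\Vert f\Vert_{\mathpzc{F}^0_{p,q}} = \Vert f\Vert_{L^p}$ while $m(H)f = m(\ell_0)f$. This yields $\Vert m(H)\Vert_{\mathscr{B}(\mathpzc{F}^0_{p,q})}\geq |m(\ell_0)|$ for every $\ell_0$, and taking the supremum over the discrete spectrum produces the identity $\Vert m(H)\Vert_{\mathscr{B}(\mathpzc{F}^0_{p,q})} = \Vert m\Vert_{L^\infty}$.

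For the particular case, $m = 1_{[0,\ell']}$ has $\Vert m\Vert_{L^\infty}=1$ once $\ell'$ exceeds the bottom of the spectrum, so $\Vert S_{\ell'}\Vert_{\mathscr{B}(\mathpzc{F}^0_{p,q})}=1$ by the equality just shown. For the strong convergence I would use that $P_\ell(S_{\ell'}f-f)=-P_\ell f$ for $\ell>\ell'$ and $0$ otherwise, hence
\[
\Vert S_{\ell'}f-f\Vert_{\mathpzc{F}^0_{p,q}} = \left\Vert\left(\sum_{\ell>\ell'}|P_\ell f|^q\right)^{1/q}\right\Vert_{L^p}.
\]
For $p,q<\infty$ this tends to $0$ by dominated convergence, since for a.e.\ $x$ the series $\sum_\ell|P_\ell f(x)|^q$ converges (because $\Vert f\Vert_{\mathpzc{F}^0_{p,q}}<\infty$) so its tail vanishes, and the integrand is dominated by $(\sum_\ell|P_\ell f|^q)^{p/q}\in L^1$. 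Since each $S_{\ell'}f$ is a finite linear combination of Hermite functions, this convergence simultaneously establishes the density of $L^2_f$ in $\mathpzc{F}^0_{p,q}$ that legitimizes the extension invoked above.

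The main load-bearing point is structural rather than analytic: because $\mathpzc{F}^0_{p,q}$ is defined through the genuine spectral projections $P_\ell$ instead of dyadic blocks, $m(H)$ commutes perfectly with the square function and the classical Mikhlin--H\"ormander derivative conditions are entirely bypassed. The only delicate step is the treatment of the endpoints $p=\infty$ or $q=\infty$, where dominated convergence and the density of $L^2_f$ are no longer automatic; I expect the convergence statement to be intended for $p,q<\infty$ (or to require a separate argument at the endpoints), whereas the norm identity $\Vert m(H)\Vert=\Vert m\Vert_{L^\infty}$ holds in full generality directly from the two-sided pointwise estimate.
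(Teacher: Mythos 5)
Your proof of the norm identity coincides with the paper's: the same diagonal relation $P_\ell(m(H)f)=m(\ell)P_\ell f$ gives the upper bound pointwise in $x$, and the lower bound comes from testing on a single eigenspace (the paper takes a single Hermite function $f=\phi_\nu$ with $2|\nu|+n=\ell_0$, you take any $f=P_{\ell_0}f$; this is the same idea). Where you genuinely diverge is the approximation statement \eqref{approximation}: the paper disposes of it in one line as a ``consequence of the uniform boundedness principle,'' whereas you compute the error exactly, $\Vert S_{\ell'}f-f\Vert_{\mathpzc{F}^0_{p,q}}=\bigl\Vert\bigl(\sum_{\ell>\ell'}|P_\ell f|^q\bigr)^{1/q}\bigr\Vert_{L^p}$, and kill the tail by dominated convergence. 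Your route is more than a stylistic variant: a uniform-boundedness argument for convergence needs convergence on a dense subspace, and the natural candidate $L^2_f(\mathbb{R}^n)$ has its density in $\mathpzc{F}^0_{p,q}$ deduced in the paper's Remark \ref{remark} \emph{from} the very convergence being proved, so the paper's one-liner is circular (or at least incomplete), while your direct tail computation proves the convergence and the density simultaneously. You are also right to flag the endpoints: for $q=\infty$ the tail $\sup_{\ell>\ell'}|P_\ell f(x)|$ decreases only to $\limsup_\ell|P_\ell f(x)|$, which need not vanish, and for $p=\infty$ dominated convergence is unavailable, so the theorem's claim ``for all $0<p,q\leq\infty$'' is not justified at the endpoints by either argument; the norm identity $\Vert m(H)\Vert_{\mathscr{B}(\mathpzc{F}^0_{p,q})}=\Vert m\Vert_{L^\infty}$, as you note, does hold in full generality. (One shared minor abuse, in both your write-up and the paper: $\Vert m\Vert_{L^\infty}$ must be read as the supremum of $|m|$ over the spectrum $2\mathbb{N}_0+n$, since values of $m$ off the spectrum are invisible to $m(H)$.)
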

\begin{proof}
 Let us consider $f\in \mathpzc{F}^{0}_{p,q}.$ Then, $P_\ell(m(H)f)=m(\ell )P_\ell f$ and 
\begin{equation}
\Vert m(H)f \Vert_{\mathpzc{F}^{0}_{p,q}}=\left\Vert \left(\sum_\ell |m(\ell)|^q|P_\ell f|^{q}\right)^{\frac{1}{q}}\right\Vert_{L^p(\mathbb{R}^n)}\leq \sup_{\ell}|m(\ell)|\Vert f \Vert_{\mathpzc{F}^{0}_{p,q}}.
\end{equation}
As consequence
\begin{equation}
\Vert m(H) \Vert_{\mathscr{B}(\mathpzc{F}^{0}_{p,q})}\leq \Vert m\Vert_{L^{\infty}}.
\end{equation} Now, for the reverse inequality, let us choose $f=\phi_{\nu},$ $\ell'=2|\nu|+n.$ Then, $\Vert m(H)f\Vert_{(\mathpzc{F}^{0}_{p,q})}=|m(\ell')|\Vert f \Vert_{(\mathpzc{F}^{0}_{p,q})}$ and as consequence $\Vert m(H) \Vert_{\mathscr{B}(\mathpzc{F}^{0}_{p,q})}\geq \sup_{\ell}|m(\ell)|.$ The second part is consequence of the uniform boundedness principle.
\end{proof}

\begin{remark}\label{remark} As an important consequence of the previous result,  $L^2_f(\mathbb{R}^n)$ is a dense subspace of every space
$\mathpzc{F}^{r}_{p,q},$ in fact, for every $f\in \mathpzc{F}^{r}_{p,q},$ the sequence $\{S_{\ell'}f\}_{\ell'}$ lies in $L^2_f(\mathbb{R}^n)$ and $S_{\ell'}f\rightarrow f$ in norm. For $n=1,$ it is well known that the sequence of operators $\{S_{\ell'}\}_{\ell'}$ is uniformly bounded on $L^p$ if and only if $\frac{4}{3}<p<4,$ so the spaces $\mathpzc{F}^{0}_{p,2}$ does not coincide necessarily with Lebesgue spaces and we have not a general Littlewood-Paley Theorem. Nevertheless, this disadvantaged fact is supered by the efficiency of such spaces when we want to estimate solutions of the Schr\"odinger equation.
\end{remark}
We will use the first part of the remark above in the following result.
\begin{lemma}
Let us consider $f\in \mathpzc{F}^0_{p,2}(\mathbb{R}^n),$ then for all $0<p\leq \infty$  we have 
\begin{equation}\label{eql2lp}
\Vert u(t,x) \Vert_{L^p_x[\mathbb{R}^n,\,L^2_t[0,2\pi]]}=\sqrt{2\pi}\Vert f \Vert_{\mathpzc{F}^0_{p,2}(\mathbb{R}^n)}.
\end{equation}
\end{lemma}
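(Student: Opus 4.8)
The plan is to use the spectral decomposition of $H$ to write the solution as an explicit trigonometric series in $t$, and then to recognize the right-hand side of \eqref{eql2lp} as a Parseval identity in the $t$-variable. First I would record that the solution of \eqref{SEq} with datum $f$ is $u(t,\cdot)=e^{-itH}f$, so that, since $HP_\ell=\ell P_\ell$ on the range of each projection, the Hermite expansion gives, for each fixed $x$,
\begin{equation}
u(t,x)=\sum_{\ell\in 2\mathbb{N}_0+n}e^{-it\ell}P_\ell f(x).
\end{equation}
I would establish the identity first on the dense subspace $L^2_f(\mathbb{R}^n)$ of finite Hermite combinations, where the sum is finite and every manipulation is elementary, and then extend it to arbitrary $f\in\mathpzc{F}^0_{p,2}$ using the density and the approximation \eqref{approximation} furnished by the multiplier theorem proved above.

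The key point is that the eigenvalues $\ell\in 2\mathbb{N}_0+n$ are \emph{distinct integers} (all congruent to $n$ modulo $2$). Hence $\{(2\pi)^{-1/2}e^{-it\ell}\}_{\ell}$ is an orthonormal system in $L^2_t[0,2\pi]$, being a subfamily of the standard Fourier basis $\{(2\pi)^{-1/2}e^{-itm}\}_{m\in\mathbb{Z}}$. Fixing $x$ and viewing the numbers $P_\ell f(x)$ as Fourier coefficients, Parseval's identity yields the pointwise relation
\begin{equation}
\int_0^{2\pi}|u(t,x)|^2\,dt=2\pi\sum_{\ell}|P_\ell f(x)|^2,
\end{equation}
so that $\Vert u(\cdot,x)\Vert_{L^2_t[0,2\pi]}=\sqrt{2\pi}\,\bigl(\sum_\ell|P_\ell f(x)|^2\bigr)^{1/2}$ for a.e. $x$. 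The finiteness of the inner sum for a.e. $x$ is guaranteed precisely by the hypothesis $f\in\mathpzc{F}^0_{p,2}$, which forces $\bigl(\sum_\ell|P_\ell f|^2\bigr)^{1/2}\in L^p$.

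Finally I would take the $L^p_x$-norm of both sides of this pointwise identity. Since $\sqrt{2\pi}$ is a constant, this gives
\begin{equation}
\Vert u(t,x)\Vert_{L^p_x[\mathbb{R}^n,\,L^2_t[0,2\pi]]}=\sqrt{2\pi}\,\Bigl\Vert\bigl(\textstyle\sum_\ell|P_\ell f|^2\bigr)^{1/2}\Bigr\Vert_{L^p(\mathbb{R}^n)}=\sqrt{2\pi}\,\Vert f\Vert_{\mathpzc{F}^0_{p,2}(\mathbb{R}^n)},
\end{equation}
which is exactly \eqref{eql2lp}, and the argument is uniform over the whole range $0<p\le\infty$. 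The only genuine subtlety, and the step I would treat most carefully, is the justification of the termwise integration (equivalently, of the pointwise Parseval identity) for general $f\in\mathpzc{F}^0_{p,2}$: this is why I would prove the statement first for $f\in L^2_f$, where the series terminates, and then pass to the limit via \eqref{approximation}, checking that $S_{\ell'}f\to f$ in $\mathpzc{F}^0_{p,2}$ transfers to convergence of the corresponding solutions in $L^p_x(L^2_t)$. I emphasize that the integrality of the eigenvalues, i.e. their common parity with $n$, is exactly what makes $[0,2\pi]$ the natural period and renders the orthogonality, and hence the identity, exact.
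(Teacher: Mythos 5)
Your proposal is correct and follows essentially the same route as the paper: reduce by density to $f\in L^2_f(\mathbb{R}^n)$ via \eqref{approximation}, expand $u(t,x)=\sum_\ell e^{-it\ell}P_\ell f(x)$, apply orthogonality of the exponentials $e^{-it\ell}$ on $[0,2\pi]$ pointwise in $x$ to get $\int_0^{2\pi}|u(t,x)|^2\,dt=2\pi\sum_\ell|P_\ell f(x)|^2$, and then take the $L^p_x$-norm. Your write-up is in fact somewhat more explicit than the paper's (which simply invokes ``orthogonality of trigonometric polynomials'') about why the distinct integer eigenvalues $\ell\in 2\mathbb{N}_0+n$ make Parseval exact and about the limiting step from $L^2_f$ to general $f\in\mathpzc{F}^0_{p,2}$.
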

\begin{proof}
In view of \eqref{approximation} we consider by density, $f\in L^2_f(\mathbb{R}^n).$  The solution $u(t,x)$ for \eqref{SEq} is given by
\begin{equation}
u(t,x)=\sum_{\nu\in\mathbb{N}^n_0}e^{-it(2|\nu|+n)}\widehat{f}(\phi_\nu)\phi_\nu(x).
\end{equation}
Then, we have (see \cite{MoyuaVega})
\begin{align*}
\Vert u(t,x) \Vert_{L^2_t[0,2\pi]}^2 =\sum_{\ell}2\pi\,\cdot|P_{\ell}f(x)|^2, 
\end{align*}
in fact, it can be proved by using the orthogonality of trigonometric polynomials. So, we conclude the following fact
\begin{equation}\label{identity1}
\Vert u(t,x) \Vert_{L^2_t[0,2\pi]}=\left(\sum_{\ell}2\pi\,\cdot|P_{\ell}f(x)|^2\right)^{\frac{1}{2}},\,\,\,  f\in L^2_f(\mathbb{R}^n).
\end{equation}
Consequently
\begin{equation}
\Vert u(t,x)\Vert_{L^p_{x}(\mathbb{R}^n,L^2_t[0,2\pi] )}=\sqrt{2\pi}\Vert f\Vert_{\mathpzc{F}^0_{p,2}(\mathbb{R}^n)}.
\end{equation}

\end{proof}

\begin{lemma}\label{T1}
Let $0<p\leq \infty,$ $2\leq q<\infty$ and $s_{q}:=\frac{1}{2}-\frac{1}{q}.$ Then
\begin{equation}
C_p'\Vert f\Vert_{\mathpzc{F}^0_{p,2}}\leq \Vert u(t,x)\Vert_{L^p_{x}(\mathbb{R}^n, L^q_{t}[0,2\pi])} \leq C_{p,s}\Vert f\Vert_{\mathpzc{F}^s_{p,2}}, 
\end{equation}
holds true for every $s\geq s_{q}.$ 
\end{lemma}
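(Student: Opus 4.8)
The plan is to prove the two inequalities separately; both rest on the observation, already exploited in the previous lemma, that for fixed $x$ the solution $u(\cdot,x)$ is the trigonometric series $u(t,x)=\sum_{\ell}e^{-it\ell}P_\ell f(x)$ on the circle $[0,2\pi]$, whose $\ell$-th Fourier coefficient (for $\ell\in 2\mathbb{N}_0+n\subset\mathbb{Z}$) is precisely $P_\ell f(x)$. As in the preceding proofs I would first assume $f\in L^2_f(\mathbb{R}^n)$, so that $u(\cdot,x)$ is a finite sum and every interchange is legitimate, and then pass to general $f$ using that $L^2_f$ is dense in each $\mathpzc{F}^s_{p,2}$ (Remark \ref{remark}) together with \eqref{approximation}.

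For the lower bound I would compare the $L^q_t$ norm with the $L^2_t$ norm on the finite-measure interval $[0,2\pi]$. Since $q\geq 2$, H\"older's inequality gives, pointwise in $x$, the bound $\Vert u(\cdot,x)\Vert_{L^2_t[0,2\pi]}\leq (2\pi)^{\frac{1}{2}-\frac{1}{q}}\Vert u(\cdot,x)\Vert_{L^q_t[0,2\pi]}$. Taking the $L^p_x$-quasi-norm of both sides (which is monotone under pointwise domination for every $0<p\leq\infty$) and invoking the identity \eqref{eql2lp} of the previous lemma, namely $\Vert u\Vert_{L^p_x(L^2_t)}=\sqrt{2\pi}\,\Vert f\Vert_{\mathpzc{F}^0_{p,2}}$, immediately yields $C_p'\Vert f\Vert_{\mathpzc{F}^0_{p,2}}\leq \Vert u\Vert_{L^p_x(L^q_t)}$ with $C_p'=(2\pi)^{1/q}$.

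For the upper bound the key step is a one–dimensional Sobolev embedding on the torus. Viewing $t\mapsto u(t,x)$ as a periodic function with spectrum contained in $\{\ell\}=2\mathbb{N}_0+n$ and Fourier coefficients $P_\ell f(x)$, the sharp Sobolev embedding $H^{s_q}(\mathbb{T})\hookrightarrow L^q(\mathbb{T})$, valid for $2\leq q<\infty$ since $s_q=\tfrac{1}{2}-\tfrac{1}{q}<\tfrac12$, gives pointwise in $x$
\[
\Vert u(\cdot,x)\Vert_{L^q_t[0,2\pi]}\leq C\Big(\sum_{\ell}\langle\ell\rangle^{2s_q}|P_\ell f(x)|^2\Big)^{\frac12}\leq C'\Big(\sum_{\ell}\ell^{2s_q}|P_\ell f(x)|^2\Big)^{\frac12},
\]
where the last step uses $\ell\geq n\geq 1$, so that $\langle\ell\rangle\asymp\ell$. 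Taking the $L^p_x$-quasi-norm of both sides turns the right–hand side into exactly $C'\Vert f\Vert_{\mathpzc{F}^{s_q}_{p,2}}$, which is the claimed estimate at the endpoint $s=s_q$; the case $s>s_q$ then follows from the embedding $\mathpzc{F}^{s}_{p,2}\hookrightarrow \mathpzc{F}^{s_q}_{p,2}$ recorded in item (1) of the embedding properties.

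I expect the genuine obstacle to be the endpoint $s=s_q$. A naive route through the Hausdorff–Young inequality, $\Vert u(\cdot,x)\Vert_{L^q_t}\leq \Vert (P_\ell f(x))_\ell\Vert_{\ell^{q'}}$, followed by H\"older to pass from $\ell^{q'}$ to a weighted $\ell^2$, only produces $s>s_q$: the gain forces the summability of $\sum_{\ell}\ell^{-1}$, which fails logarithmically exactly at $s=s_q$. This is why I would invoke the sharp (critical) Sobolev embedding on $\mathbb{T}$, equivalently a Littlewood–Paley/interpolation argument, rather than the elementary Hausdorff–Young–H\"older chain; everything else reduces to the previous lemma and to H\"older on a finite interval.
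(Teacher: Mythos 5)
Your proposal is correct and follows essentially the same route as the paper: the lower bound via H\"older on $[0,2\pi]$ combined with the identity \eqref{eql2lp}, and the upper bound via the critical one-dimensional Sobolev embedding $H^{s_q}(\mathbb{T})\hookrightarrow L^q(\mathbb{T})$, which is precisely the Wainger Sobolev embedding theorem (with $r=2$) that the paper invokes, followed by the monotonicity $\mathpzc{F}^{s}_{p,2}\hookrightarrow\mathpzc{F}^{s_q}_{p,2}$ for $s>s_q$. Your closing observation that Hausdorff--Young plus H\"older misses the endpoint $s=s_q$ correctly identifies why the sharp embedding is needed.
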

\begin{proof}
We consider, by a density argument, $f\in L^2_f(\mathbb{R}^n).$ By following the approach in \cite{BonRog}, in order to estimate the norm  $\Vert u(t,x) \Vert_{L^p_x[\mathbb{R}^n,\,L^q_t[0,2\pi]]}$ we use the Wainger Sobolev embedding Theorem:
\begin{equation}
\left\Vert \sum_{\ell\in \mathbb{Z},\ell\neq 0}|\ell|^{-\alpha}\widehat{F}(\ell)e^{-i\ell t}\right\Vert_{L^q[0,2\pi]}\leq C\Vert F\Vert_{L^r[0,2\pi]},\,\,\alpha:=\frac{1}{r}-\frac{1}{q}.
\end{equation}
For $s_q:=\frac{1}{2}-\frac{1}{q}$  we have
\begin{align*}
\Vert u(t,x)\Vert_{L^q[0,2\pi]} &=\left\Vert \sum_{\nu\in\mathbb{N}^n_0}e^{-it(2|\nu|+n)}\widehat{f}(\phi_\nu)\phi_\nu(x)   \right\Vert_{L^q[0,2\pi]}=\left\Vert \sum_{\ell} e^{-it\ell}P_{\ell}f(x)  \right\Vert_{L^q[0,2\pi]} \\
&\leq C\left\Vert \sum_{\ell} \ell^{s_q}e^{-it\ell}P_{\ell}f(x)  \right\Vert_{L^2[0,2\pi]}\\
&=C\left\Vert \sum_{\ell} e^{-it\ell}P_{\ell}[H^{s_q}f(x)]  \right\Vert_{L^2[0,2\pi]}\\
&=C\left( \sum_{\ell} |P_{\ell}[H^{s_q}f(x)|^2  \right)\\
&:=T'(H^{s_q}f)(x).
\end{align*}
 So, we have
\begin{equation}\label{31}
\Vert u(t,x) \Vert_{L^p_x[\mathbb{R}^n,\,L^q_t[0,2\pi]]}\leq C\Vert T'(H^{s_q}f) \Vert_{L^p(\mathbb{R}^n)}\leq C_{p}\Vert H^{s_q}f\Vert_{\mathpzc{F}^0_{p,2}(\mathbb{R}^n)}=C_{p}\Vert f\Vert_{\mathpzc{F}^{s_q}_{p,2}(\mathbb{R}^n)}.
\end{equation}
We end the proof by taking into account the embedding $\mathpzc{F}^s_{p,2}\hookrightarrow \mathpzc{F}^{s_q}_{p,2}$ for every $s>s_q$ and the following inequality for $2\leq q<\infty$
\begin{equation}
\Vert f\Vert_{\mathpzc{F}^0_{p,2}}=\frac{1}{\sqrt{2\pi}} \Vert T'f\Vert_{L^p}= \frac{1}{\sqrt{2\pi}} \Vert u(t,x) \Vert_{L^p_x[\mathbb{R}^n,\,L^2_t[0,2\pi]]}\lesssim \Vert u(t,x) \Vert_{L^p_x[\mathbb{R}^n,\,L^q_t[0,2\pi]]}.
\end{equation}
\end{proof}

\begin{theorem}
Let us assume $n>2,$ $2\leq q<\infty$ and $1<p<2,$  satisfying $|\frac{1}{2}-\frac{1}{p}|<\frac{1}{2n}.$ Then, the following estimate
\begin{equation}
\Vert u(t,x) \Vert_{L^{p'}_x[\mathbb{R}^n,\,L^q_t[0,2\pi]]}\leq C\Vert f\Vert_{W^{2s,p,H}(\mathbb{R}^n)} 
\end{equation}
holds true for every $s\geq s_q:=\frac{1}{2}-\frac{1}{q}.$ In particular, if $q=2$ we have
\begin{equation}
\Vert u(t,x) \Vert_{L^{p'}_x[\mathbb{R}^n,\,L^2_t[0,2\pi]]}\leq C\Vert f\Vert_{L^{p}(\mathbb{R}^n)}.
\end{equation} Moreover, for $n>2,$  $1<p<2,$ and $1\leq q\leq p',$ we have
\begin{equation}\label{lplq}
\Vert u(t,x) \Vert_{L^{p'}_x[\mathbb{R}^n,\,L^q_t[0,2\pi]]}\leq C\Vert f\Vert_{L^{p}(\mathbb{R}^n)},
\end{equation} provided that $|\frac{1}{p}-\frac{1}{2}|<\frac{1}{nq}.$
\end{theorem}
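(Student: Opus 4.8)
The plan is to handle the three assertions separately, obtaining the first two directly from the apparatus of Section \ref{Preliminaries} and reserving a genuinely dispersive argument for the third. For the estimate \eqref{mainestimate} I would apply Lemma \ref{T1} with the exponent $p'$ in the role of its $p$ (legitimate since the lemma is stated for all $0<p\le\infty$ and $p'>2$ here); this gives $\Vert u\Vert_{L^{p'}_x(\mathbb{R}^n,\,L^q_t[0,2\pi])}\le C\Vert f\Vert_{\mathpzc{F}^{s}_{p',2}}$ for every $s\ge s_q$. Composing this with the embedding \eqref{finalembedding}, which holds exactly under the hypothesis $|\frac1p-\frac12|<\frac{1}{2n}$, replaces the Triebel--Lizorkin norm by $\Vert f\Vert_{W^{2s,p,H}}$ and finishes the first part. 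The case $q=2$ is the endpoint $s_q=0$, and there it is cleanest to combine the identity \eqref{eql2lp} at exponent $p'$ with the Littlewood--Paley substitute \eqref{analoguelittlewood}, namely
\[
\Vert u(t,x)\Vert_{L^{p'}_x(\mathbb{R}^n,\,L^2_t[0,2\pi])}=\sqrt{2\pi}\,\Vert f\Vert_{\mathpzc{F}^0_{p',2}}\le C\Vert f\Vert_{L^p}.
\]
So the first two displays cost nothing beyond the tools already assembled.

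The estimate \eqref{lplq} is of a different nature: for $q$ possibly below $2$ Lemma \ref{T1} is unavailable, and moreover one wants no loss of derivatives on the right. The role of the constraint $q\le p'$ is precisely to permit swapping the order of the mixed norm by Minkowski's integral inequality,
\[
\Vert u(t,x)\Vert_{L^{p'}_x(\mathbb{R}^n,\,L^q_t[0,2\pi])}\le \Vert u(t,x)\Vert_{L^q_t([0,2\pi],\,L^{p'}_x(\mathbb{R}^n))},
\]
which reduces matters to the fixed-time mapping properties of $e^{-itH}$ from $L^p_x$ to $L^{p'}_x$, integrated in $t$.

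For that fixed-time bound I would read a dispersive estimate off Mehler's formula \eqref{trace2}. Continuing it analytically to imaginary time $t\mapsto it$ turns $\sinh 2t$, $\coth 2t$ and $\textnormal{csch}\,2t$ into $i\sin 2t$, $-i\cot 2t$ and $-i\,\textnormal{csc}\,2t$, so the Gaussian exponent becomes purely imaginary and the kernel of $e^{-itH}$ has modulus $C|\sin 2t|^{-n/2}$. Hence $\Vert e^{-itH}\Vert_{L^1\to L^\infty}\le C|\sin 2t|^{-n/2}$, and interpolating with the unitary bound $\Vert e^{-itH}\Vert_{L^2\to L^2}=1$ yields $\Vert e^{-itH}f\Vert_{L^{p'}_x}\le C|\sin 2t|^{-n(\frac1p-\frac12)}\Vert f\Vert_{L^p}$. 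Raising to the power $q$ and integrating over $[0,2\pi]$ then reduces \eqref{lplq} to the finiteness of $\int_0^{2\pi}|\sin 2t|^{-n(\frac1p-\frac12)q}\,dt$; since $\sin 2t$ has only simple zeros, near each zero $|\sin 2t|\sim|t-t_0|$ and the integral converges exactly when $n(\frac1p-\frac12)q<1$, which is the stated hypothesis $|\frac1p-\frac12|<\frac{1}{nq}$.

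The routine ingredients are the two mixed-norm manipulations and the Riesz--Thorin interpolation. The step requiring real care is the dispersive estimate itself: justifying the analytic continuation of Mehler's kernel to purely imaginary time and extracting the sharp factor $|\sin 2t|^{-n/2}$ (one could alternatively transfer the classical free dispersive bound through the Sj\"ogren--Torrea identity \eqref{SjToFormula}, which trades the unbounded $t$-window on the Laplacian side for the bounded one here). Once that bound is secured, the bookkeeping at the finitely many zeros of $\sin 2t$ reproduces precisely the admissible range of $p$ and $q$, and I expect no further obstruction.
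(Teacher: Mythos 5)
Your argument is essentially the paper's own. The core of the proof there is the fixed-time dispersive bound $\Vert e^{-itH}f\Vert_{L^{p'}_x}\lesssim |t|^{-n|\frac{1}{p}-\frac{1}{2}|}\Vert f\Vert_{L^p}$ (quoted from Karadzhov), followed by integration in $t$ and Minkowski's integral inequality (using $2\le p'$, resp. $q\le p'$) to pass to the $L^{p'}_x L^q_t$ norm, and, for $2<q<\infty$, the Wainger embedding of Lemma \ref{T1} applied at exponent $p'$ together with \eqref{finalembedding} --- exactly the three ingredients you list. Two remarks. First, \eqref{analoguelittlewood} and \eqref{finalembedding} are \emph{not} pre-established in Section \ref{Preliminaries}: the paper only announces them there and actually proves them as \eqref{LpT} inside this very theorem, by precisely the dispersive-plus-Minkowski computation you reserve for your third part. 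So your first two parts are not ``free''; they rest on the $q=2$ instance of your third part, and the argument should be ordered accordingly (dispersive estimate first, then $q=2$, then $2<q<\infty$) to avoid an appearance of circularity. Second, you derive the dispersive bound from Mehler's formula \eqref{trace2} by continuation to imaginary time, obtaining the factor $|\sin 2t|^{-n(\frac{1}{p}-\frac{1}{2})}$, where the paper simply cites the estimate with $|t|^{-n|\frac{1}{p}-\frac{1}{2}|}$; your version is in fact the more accurate one on all of $[0,2\pi]$ (the kernel is singular at every multiple of $\pi/2$, not only at $t=0$), and since $\sin 2t$ has only simple zeros the integrability thresholds coincide, so the admissible ranges of $p$ and $q$ are unchanged.
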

\begin{proof}
First, we want to proof the case $q=2$ and later we extend the proof for $2<q<\infty$ by using a suitable embedding. Our main tool will be the following dispersive inequality (see \cite{Kara} pg. 114.)
\begin{equation}
\Vert u(t,x) \Vert_{L^{p'}_x(\mathbb{R}^n)}\leq C|t|^{-n|\frac{1}{p}-\frac{1}{2}|}\Vert f \Vert_{L^p(\mathbb{R}^n)},\,\,1<p<2.
\end{equation}
As consequence we have
\begin{equation}\label{estimatekool}
\Vert u(t,x) \Vert_{L^2_t([0,2\pi], L^{p'}_x(\mathbb{R}^n))    }\leq C\Vert |\,\cdot\,|^{-n|\frac{1}{p}-\frac{1}{2}|}\Vert_{L^2[0,2\pi]} \Vert f \Vert_{L^p(\mathbb{R}^n)},\,\,1<p<2.
\end{equation} We need $|\frac{1}{p}-\frac{1}{2}|<\frac{1}{2n}$ in order that $\Vert |\,\cdot\,|^{-n|\frac{1}{p}-\frac{1}{2}|}\Vert_{L^2[0,2\pi]}<\infty.$ Because $p'\geq 2$ we can use Minkowski integral inequality in order to obtain
\begin{equation}\label{LpT}
\Vert f \Vert_{\mathpzc{F}^0_{p',2}}=\Vert u(t,x)\Vert_{L^{p'}_x(\mathbb{R}^n, L^2_t([0,2\pi]))} \leq \Vert u(t,x) \Vert_{L^2_t([0,2\pi], L^{p'}_x(\mathbb{R}^n))    }\lesssim \Vert f \Vert_{L^p(\mathbb{R}^n)}.\, 
\end{equation} In fact we have,
\begin{align*}\Vert u(t,x)\Vert_{L^{p'}_x(\mathbb{R}^n, L^2_t([0,2\pi]))} &:=\left(\int\limits_{\mathbb{R}^n}\left( \int\limits_{0}^{2\pi} |u(t,x)|^2dt \right)^{\frac{p'}{2}}    dx\right)^{\frac{2}{p'}\cdot\frac{1}{2} }\\
&\leq \left( \int\limits_{0}^{2\pi} \left(\int\limits_{\mathbb{R}^n}  |u(t,x)|^{p'}dx \right)^{\frac{p'}{2}}dt    \right)^{\frac{1}{2} }=:\Vert u(t,x) \Vert_{L^2_t([0,2\pi], L^{p'}_x(\mathbb{R}^n))    }.
\end{align*}
Now \eqref{LpT} can be obtained from \eqref{estimatekool} for $1<p<2$ and $|\frac{1}{p}-\frac{1}{2}|<\frac{1}{2n}.$ The estimate \eqref{LpT}  proves the theorem for $q=2.$ The result for $2<q<\infty$ now follows, as in the proof of Theorem \ref{T1}, by using  the Wainger Sobolev embedding Theorem as in \eqref{31} together with \eqref{finalembedding}:
\begin{align*}
\Vert u(t,x) \Vert_{L^{p'}_x[\mathbb{R}^n,\,L^q_t[0,2\pi]]}\leq C &\Vert T'(H^{s_q}f) \Vert_{L^{p'}(\mathbb{R}^n)}\leq C_{p'}\Vert H^{s_q}f\Vert_{\mathpzc{F}^0_{p',2}(\mathbb{R}^n)}\\ 
&= C_{p'}\Vert f\Vert_{\mathpzc{F}^{s_q}_{p',2}(\mathbb{R}^n)} \leq C\Vert f\Vert_{W^{2s_q,p,H}(\mathbb{R}^n)}.
\end{align*} 
So, we end the  proof of the first announcement. Now, in order to proof \eqref{lplq}  we observe that 
\begin{equation}
\Vert u(t,x) \Vert_{L^{p'}_x(\mathbb{R}^n)}\leq C|t|^{-n|\frac{1}{p}-\frac{1}{2}|}\Vert f \Vert_{L^p(\mathbb{R}^n)},\,\,1<p<2,
\end{equation}
implies 
\begin{equation}
\Vert u(t,x) \Vert_{L_t^q[[0,2\pi],L^{p'}_x(\mathbb{R}^n)]}\leq C\cdot I_{p,n,q}\Vert f \Vert_{L^p(\mathbb{R}^n)},\,\,1<p<2,
\end{equation}
where $$I_{p,n,q}=(\int_{0}^{2\pi}|t|^{-nq|\frac{1}{p}-\frac{1}{2}|})^{\frac{1}{q}}<\infty $$
for $|1/2-1/p|<1/nq.$ Since, $q\leq p',$  by using the Minkowski inequality we have 
\begin{equation}
\Vert u(t,x) \Vert_{L^{p'}_x[\mathbb{R}^n,\,L^q_t[0,2\pi]]} \leq \Vert u(t,x) \Vert_{L_t^q[[0,2\pi],L^{p'}_x(\mathbb{R}^n)]}
\end{equation} and consequently
$$\Vert u(t,x) \Vert_{L^{p'}_x[\mathbb{R}^n,\,L^q_t[0,2\pi]]}\leq C\Vert f\Vert_{L^p}. $$
\end{proof}

\begin{theorem}\label{StrEst'}
Let us assume, for some $s,$ that $f\in \mathpzc{F}^s_{p,2}(\mathbb{R}^n)$ is a real function and $u(\cdot,t)=e^{-itH}f(\cdot).$ Let us assume $2\leq p\leq q<\infty $ and $\frac{2}{q}= n(\frac{1}{2}-\frac{1}{p}).$ Then, the following estimate
\begin{equation}\label{SECO}
\Vert e^{it\Delta}f\Vert_{L^q[(0,\infty), L_x^p(\mathbb{R}^n)]}\asymp \Vert u(t,x)\Vert_{L^q_{t}([0,2\pi]\,, L^p_{x}(\mathbb{R}^n))}\leq C \Vert f\Vert_{\mathpzc{F}^s_{p,2}(\mathbb{R}^n)},\,\, s\geq {s_q},
\end{equation}
holds true. Consequently we have
\begin{equation}\label{consequence}
\Vert e^{it\Delta}f\Vert_{L^q[(0,\infty), L_x^{p'}(\mathbb{R}^d)]}\asymp\Vert u(t,x)\Vert_{L^q_{t}([0,2\pi]\,, L^{p'}_{x}(\mathbb{R}^n))}\leq C \Vert f\Vert_{W^{2s,p,H}(\mathbb{R}^n)},\,\, s\geq {s_q},
\end{equation}
for $|\frac{1}{p}-\frac{1}{2}|<\frac{1}{2n},$ $1<p<2,$ $n>2$ and $\frac{2}{q}= n(\frac{1}{p}-\frac{1}{2}).$
Moreover, for $2\leq q\leq p<\infty$ and $\frac{2}{q}= n(\frac{1}{2}-\frac{1}{p})$ we have
\begin{equation}
\Vert f\Vert_{\mathpzc{F}^{0}_{p,2}(\mathbb{R}^n)}\leq C\Vert e^{it\Delta}f\Vert_{L^q[(0,\infty), L_x^p(\mathbb{R}^n)]},C\Vert u(t,x)\Vert_{L^q_{t}([0,2\pi]\,, L^p_{x}(\mathbb{R}^n))}.
\end{equation}
\end{theorem}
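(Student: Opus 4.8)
The three displayed estimates are all obtained by combining the results already established in this section with a single interchange of the order of integration, supplemented by the Sj\"ogren--Torrea identity \eqref{SjToFormula} to pass to the free propagator $e^{it\Delta}$. The plan is to recognize each left-hand mixed norm $\Vert u\Vert_{L^q_t(L^p_x)}$ (resp. $L^{p'}_x$) as the ``wrong order'' version of a norm already controlled in Lemma \ref{T1} or in Theorem \ref{mainThe}, and to correct the order by Minkowski's integral inequality, whose direction is dictated precisely by the hypotheses $p\le q$ or $q\le p$.

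For \eqref{SECO}, since $2\le p\le q$, Minkowski's integral inequality for the pair $(t,x)$ gives
\begin{equation}
\Vert u(t,x)\Vert_{L^q_t([0,2\pi],L^p_x)}\le \Vert u(t,x)\Vert_{L^p_x(\mathbb{R}^n,L^q_t[0,2\pi])},
\end{equation}
and the right-hand side is bounded by $C\Vert f\Vert_{\mathpzc{F}^s_{p,2}}$ for every $s\ge s_q$ directly from Lemma \ref{T1}. This yields the upper bound. For the equivalence $\asymp$ with $\Vert e^{it\Delta}f\Vert_{L^q[(0,\infty),L^p_x]}$ I would invoke \eqref{SjToFormula}: the scaling hypothesis $\frac{2}{q}=n(\frac12-\frac1p)$ is exactly the one required there, and since $f$ is real the harmonic-oscillator norm and the free norm are comparable.

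For \eqref{consequence} the scaling condition $\frac{2}{q}=n(\frac1p-\frac12)=n(\frac12-\frac1{p'})$ is the Sj\"ogren--Torrea relation with exponent $p'$, so the same transfer applies once the upper bound is in place. Here I would start from Theorem \ref{mainThe}, whose hypotheses $n>2$, $1<p<2$, $|\frac12-\frac1p|<\frac1{2n}$, $2\le q<\infty$ are exactly those assumed, giving $\Vert u\Vert_{L^{p'}_x(L^q_t)}\le C\Vert f\Vert_{W^{2s,p,H}}$ for $s\ge s_q$. Since $|\frac1p-\frac12|<\frac1{2n}$ forces $\frac{2}{q}<\frac12$, i.e. $q>4$, while $p'\in(2,\frac{2n}{n-1})$, one has $p'\le q$, so Minkowski again gives $\Vert u\Vert_{L^q_t(L^{p'}_x)}\le \Vert u\Vert_{L^{p'}_x(L^q_t)}$ and \eqref{consequence} follows. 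The last, reverse estimate is dual in spirit: now $2\le q\le p$, so Minkowski moves in the opposite direction, $\Vert u\Vert_{L^p_x(L^q_t)}\le \Vert u\Vert_{L^q_t(L^p_x)}$, and combining this with the lower bound $C_p'\Vert f\Vert_{\mathpzc{F}^0_{p,2}}\le \Vert u\Vert_{L^p_x(L^q_t)}$ of Lemma \ref{T1} gives $\Vert f\Vert_{\mathpzc{F}^0_{p,2}}\le C\Vert u\Vert_{L^q_t(L^p_x)}$, which transfers to $e^{it\Delta}$ via \eqref{SjToFormula}.

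The routine steps are the applications of Lemma \ref{T1}, Theorem \ref{mainThe}, and Minkowski's inequality; the only genuinely delicate point is the equivalence $\asymp$. The formula \eqref{SjToFormula} is an identity on $[0,\tfrac{\pi}{4}]$ (an equivalence on $[0,\tfrac{\pi}{2}]$ for real $f$), whereas the theorem is stated on $[0,2\pi]$. I expect the main work to lie in justifying that passage: one must use that $f$ is real and that the flow $e^{-itH}$ is essentially periodic --- up to a global phase $t\mapsto |u(t,x)|$ has period $\pi$ or $2\pi$, inherited from the integer spectrum $\{2|\nu|+n\}$ of $H$ --- so that the $L^q_t$ norm over $[0,2\pi]$ is comparable, with constants depending only on $n$, to the norm over a fundamental interval contained in $[0,\tfrac{\pi}{2}]$. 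Once this reduction to a bounded fundamental domain is in place, \eqref{SjToFormula} supplies the two-sided comparison with $\Vert e^{it\Delta}f\Vert_{L^q[(0,\infty),L^p_x]}$ and all three equivalences close.
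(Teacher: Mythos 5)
Your proposal is correct and follows essentially the same route as the paper: Minkowski's integral inequality in the direction dictated by $p\le q$ or $q\le p$, the bounds of Lemma \ref{T1} (for \eqref{consequence} the paper uses the embedding \eqref{finalembedding} together with \eqref{SECO} at exponent $p'$, which is the same chain of inequalities as your appeal to Theorem \ref{mainThe}), and the transfer to $e^{it\Delta}$ via \eqref{SjToFormula}. Your closing observation about reconciling the interval $[0,2\pi]$ with the fundamental domain $[0,\pi/2]$ appearing in \eqref{SjToFormula} --- via the periodicity $|u(t+\pi,x)|=|u(t,x)|$ inherited from the integer spectrum and the reflection symmetry for real $f$ --- addresses a point the paper's proof passes over in silence, and your sketch of it is sound.
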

\begin{proof}
From the Minkowski integral inequality applied to $L^{\frac{q}{p}}$, we deduce the inequality
\begin{equation}
\Vert u(t,x)\Vert_{L^q_{t}([0,2\pi]\,, L^p_{x}(\mathbb{R}^n))}\leq \Vert u(t,x) \Vert_{L^p_x[\mathbb{R}^n,\,L^q_t[0,2\pi]]}.
\end{equation}
In fact, 
\begin{align*}
\Vert u(t,x)\Vert_{L^q_{t}([0,2\pi]\,, L^p_{x}(\mathbb{R}^n))} &:=\left(\int\limits_{0}^{2\pi}\left( \int\limits_{\mathbb{R}^n}|u(t,x)|^pdx \right)^{\frac{q}{p}}    dt\right)^{\frac{p}{q}\cdot\frac{1}{p} }\\
&\leq \left( \int\limits_{\mathbb{R}^n}\left(\int\limits_{0}^{2\pi}|u(t,x)|^qdt \right)^{\frac{p}{q}}dx    \right)^{\frac{1}{p} }=:\Vert u(t,x) \Vert_{L^p_x[\mathbb{R}^n,\,L^q_t[0,2\pi]]}.
\end{align*}

Now, we only need to apply Lemma \ref{T1} and the equivalence given by \eqref{SjToFormula}. The estimate \eqref{consequence} is consequence of \eqref{finalembedding} and \eqref{SECO} applied to $p'$ instead of $p$. On the other hand, for $2\leq q\leq p<\infty,$ by using the Minkowski integral inequality on $L^{\frac{p}{q}}$ we have 
\begin{equation}
\Vert f\Vert_{\mathpzc{F}^{0}_{p,2}(\mathbb{R}^n)}= \Vert u(t,x) \Vert_{L^p_x[\mathbb{R}^n,\,L^2_t[0,2\pi]]} \lesssim \Vert u(t,x) \Vert_{L^p_x[\mathbb{R}^n,\,L^q_t[0,2\pi]]}\leq \Vert u(t,x)\Vert_{L^q_{t}([0,2\pi]\,, L^p_{x}(\mathbb{R}^n))}.
\end{equation} So, by using  the equivalence expressed in  \eqref{SjToFormula} we obtain
$$\Vert f\Vert_{\mathpzc{F}^{0}_{p,2}(\mathbb{R}^n)}\leq C\Vert e^{it\Delta}f\Vert_{L^q[(0,\infty), L_x^p(\mathbb{R}^n)]}\asymp C\Vert u(t,x)\Vert_{L^q_{t}([0,2\pi]\,, L^p_{x}(\mathbb{R}^n))}$$ which end the proof of the theorem.
\end{proof}

\begin{corollary}\label{improv}
Let  $1<q\leq p<\infty$ and $\frac{1}{q}= \frac{n}{2}(\frac{1}{2}-\frac{1}{p}).$  Then,
\begin{equation}\label{impro'}
 \Vert u(t,x)\Vert_{L^p_{x}(\mathbb{R}^n\,, L^q_{t}[0,2\pi])}\leq C_s\Vert f\Vert_{{L}^2(\mathbb{R}^n)},
\end{equation}
holds true provided that, $n=1$ and $2\leq p\leq \infty,$ $n=2$ and $2\leq p<\infty$ and $2\leq p<\frac{2n}{n-2}$ for $n\geq 3.$ 
\end{corollary}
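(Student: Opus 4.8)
The plan is to read \eqref{impro'} off the free Schr\"odinger Strichartz estimate \eqref{KT'}, whose exact range of validity is precisely the list of conditions in the statement, after two elementary reductions: reversing the order of the mixed norm, and transporting the Hermite propagator to $e^{it\Delta}$ through the Sj\"ogren--Torrea identity \eqref{SjToFormula}.

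First I would interchange the two mixed-norm exponents. The scaling relation in \eqref{impro'}, namely $\frac{1}{q}=\frac n2(\frac12-\frac1p)$, forces $p\ge 2$, and in the admissible region we are now in the regime $q\le p$, the reverse of the range $p\le q$ exploited in Theorem \ref{StrEst'}. Consequently Minkowski's integral inequality applied to the exponent $L^{p/q}$ (with $p/q\ge 1$), through $\big\Vert \int_0^{2\pi}|u|^q\,dt\big\Vert_{L^{p/q}_x}\le \int_0^{2\pi}\big\Vert |u|^q\big\Vert_{L^{p/q}_x}\,dt$ and a $q$-th root, yields
$$\Vert u(t,x)\Vert_{L^p_x(\mathbb{R}^n,\,L^q_t[0,2\pi])}\le \Vert u(t,x)\Vert_{L^q_t([0,2\pi],\,L^p_x(\mathbb{R}^n))}.$$

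Next I would pass to the free flow and invoke the known estimate. Since $\frac1q=\frac n2(\frac12-\frac1p)$ is precisely $\frac2q=n(\frac12-\frac1p)$, the admissibility hypothesis of \eqref{SjToFormula} holds, so the right-hand side above is comparable to $\Vert e^{it\Delta}f\Vert_{L^q[(0,\infty),\,L^p_x(\mathbb{R}^n)]}$; here I use the two-sided form of \eqref{SjToFormula} on a bounded interval, which is legitimate because $|e^{-itH}f|$ is $\pi$-periodic in $t$ (the spectrum $\lambda_\nu=2|\nu|+n$ is integer spaced), so integration over $[0,2\pi]$ and over $(0,\pi/2)$ differ only by a multiplicative constant. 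Applying \eqref{KT'}, valid exactly under the stated conditions on $(n,p)$, bounds this by $C\Vert f\Vert_{L^2}$, and chaining the three inequalities proves \eqref{impro'}. The asserted improvement of \eqref{BV} is then immediate, since $\lambda_\nu\ge n\ge 1$ gives $\Vert f\Vert_{L^2}\le C\Vert f\Vert_{\mathcal H^s}$ for every $s\ge 0$.

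I expect the only real difficulty to be bookkeeping rather than analysis: one must verify that the admissibility relation $\frac2q=n(\frac12-\frac1p)$ together with $q\le p$ genuinely lands inside the region where \eqref{KT'} is known (respecting the excluded endpoint $(p,q,n)=(\infty,2,2)$), and confirm that \eqref{SjToFormula}, originally an equality on $(0,\pi/4)$, survives as a two-sided estimate on $[0,2\pi]$. Both are settled by the remarks following \eqref{SjToFormula} and by periodicity, so beyond Minkowski, \eqref{SjToFormula} and \eqref{KT'} no further ingredient is needed.
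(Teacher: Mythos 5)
Your argument is correct and coincides with the paper's own proof: both reduce \eqref{impro'} to $\Vert u\Vert_{L^q_t L^p_x}$ via Minkowski's integral inequality on $L^{p/q}$ (using $q\leq p$), then invoke the Sj\"ogren--Torrea equivalence \eqref{SjToFormula} and the free Strichartz estimate \eqref{KT'}. The extra care you take with the passage from $(0,\pi/4)$ to $[0,2\pi]$ via periodicity is a welcome detail the paper leaves implicit, but it does not change the route.
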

\begin{proof}
Same as in Theorem \ref{StrEst'}, by using the Minkowski integral inequality on $L^{\frac{p}{q}},$ for $1<q\leq p<\infty$, we have the inequality
\begin{equation}
\Vert u(t,x) \Vert_{L^p_x[\mathbb{R}^n,\,L^q_t[0,2\pi]]}\leq \Vert u(t,x)\Vert_{L^q_{t}([0,2\pi]\,, L^p_{x}(\mathbb{R}^n))} .
\end{equation}
Finally \eqref{impro'} now follows by using \eqref{KT'} and the equivalence \eqref{SjToFormula}.

\end{proof}

\end{document}